% Template for the submittion to:
%   Journal of Combinatorics          [joc]
%
% Author: In this template, the places where you need to add information
%         (or delete line) are indicated by {???}.  Mostly the information
%         required is obvious, but some explanations are given in lines starting
% Author:
% All other lines should be ignored.  After editing, there should be
% no instances of ??? after this line.

% journal options: joc
\documentclass{article}

\usepackage{amsmath}
\usepackage{amssymb}
\usepackage{amsthm}
\usepackage{tikz}
\usepackage{verbatim}

\usepackage[T2A]{fontenc}
\usepackage[utf8]{inputenc}

\newtheorem{theorem}{Theorem}
\newtheorem{prop}[theorem]{Proposition}
\newtheorem{lemma}[theorem]{Lemma}
\newtheorem{cor}[theorem]{Corollary}
\newtheorem{conjecture}{Conjecture}
\newtheorem{definition}{Definition}

\newcommand{\R}{\mathbb{R}}
\newcommand{\Z}{\mathbb{Z}}

\DeclareMathOperator{\Sh}{\textup{Ш}}
\DeclareMathOperator{\ash}{\overline{\Sh}}
\DeclareMathOperator{\cat}{cat}
\DeclareMathOperator{\wt}{wt}

\newcommand{\half}{\frac{1}{2}}
\newcommand{\linegraph}[1]{L^{(#1)}}

\title{Graham's Tree Reconstruction Conjecture and a Waring-Type Problem on Partitions}%\thanksref{t1}}
\author{Joshua Cooper, Bill Kay, and Anton Swifton}

\begin{document}

\maketitle

\begin{abstract}
Suppose $G$ is a tree.  Graham's ``Tree Reconstruction Conjecture'' states that $G$ is uniquely determined by the integer sequence $|G|$, $|L(G)|$, $|L(L(G))|$, $|L(L(L(G)))|$, $\ldots$, where $L(H)$ denotes the line graph of the graph $H$.  Little is known about this question apart from a few simple observations.  We show that the number of trees on $n$ vertices which can be distinguished by their associated integer sequences is $e^{\Omega((\log n)^{3/2})}$.  The proof strategy involves constructing a large collection of caterpillar graphs using partitions arising from the Prouhet-Tarry-Escott problem.
\end{abstract}

\section{Introduction}

A conjecture of R.~L.~Graham (see, e.g., \cite{GR01}), often referred to as the ``Tree Reconstruction Conjecture'', states that, if $G$ is a tree, then $G$ is uniquely determined by the sequence of sizes of its iterated line graphs.  To make this statement precise, we start with a few definitions.  All graphs are taken to be simple and undirected; a {\it tree} is an acyclic, connected graph.  Given a graph $G = (V,E)$, define the {\em line graph} $L(G)$ to be a graph with vertex set $E$, and for distinct $e,f \in E$ we have $\{e,f\} \in E(L(G))$ iff $e \cap f \neq \emptyset$, i.e., $e$ and $f$ are incident in $G$.  We denote the $j^{\textrm{th}}$-iterated line graph by $L^{(j)}(G)$. $L^{(0)}(G) = G$ and $L^{(j+1)}(G) = L(L^{(j)}(G))$ for $j \geq 0$.

\begin{definition}
The Graham sequence of a graph \(G\) is the sequence of sizes of its iterated line graphs \(|\linegraph{0}(G)|, |\linegraph{1}(G)|, |\linegraph{2}(G)|, \ldots\)
\end{definition}

\begin{conjecture}[Graham]
\label{Graham} For each sequence of natural numbers $a_0,a_1,a_2,\ldots$, all the conditions $|L^{(j)}(G)| = a_j$ for $j \geq 0$ are satisfied by at most one tree $G$.
\end{conjecture}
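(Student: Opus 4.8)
The plan is to prove the claimed injectivity of the map $G \mapsto (a_j)_{j\ge 0}$, where $a_j = |L^{(j)}(G)|$, by showing that the sequence determines, level by level, a hierarchy of combinatorial statistics of $G$ that together form a complete isomorphism invariant of trees. First I would record the exact recursion governing the sequence. Since the number of vertices of $L^{(j+1)}(G)$ equals the number of edges of $L^{(j)}(G)$, and a simple graph has $\sum_v \binom{d(v)}{2}$ edges in its line graph,
\begin{equation}
a_{j+1} \;=\; \bigl|E(L^{(j)}(G))\bigr| \;=\; \sum_{v \in V(L^{(j-1)}(G))} \binom{d(v)}{2},
\end{equation}
with degrees computed in $L^{(j-1)}(G)$. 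Unwinding one further level, a vertex of $L^{(j-1)}(G)$ is an edge $\{p,q\}$ of $L^{(j-2)}(G)$ of degree $d(p)+d(q)-2$, so in particular $a_2 = \sum_{v\in V(G)}\binom{d_v}{2}$ and $a_3 = \sum_{\{p,q\}\in E(G)}\binom{d_p+d_q-2}{2}$. The former pins down the power sum $p_2 = \sum_v d_v^2$ (using $a_0 = |G|$ and $\sum_v d_v = 2(a_0-1)$). The latter, however, determines only the single linear combination $p_3 + 2\sum_{\{p,q\}\in E(G)} d_p d_q$: the degree-cube statistic and the edge-degree-correlation statistic already arrive pre-mixed. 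This is the essential feature of the problem --- each new term of the sequence supplies exactly one scalar, while the number of genuinely distinct structural statistics feeding into it grows.

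Second, I would fix a candidate complete invariant and try to extract it from the sequence. For a finite tree shape $\tau$ together with a labeling of its vertices by positive integers, let $N_\tau(G)$ be the number of subtrees of $G$ isomorphic to $\tau$ whose vertices realize the prescribed degrees in $G$. The proposed structural theorem is that the collection $\{N_\tau(G)\}_\tau$ determines $G$ up to isomorphism, proved by a centroid-rooted peeling argument: the decorated path counts dictate, at each radius, how many branches of each degree profile must be attached, so the tree can be grown outward deterministically. Granting this, the reduction step is to invert the system expressing $(a_j)$ in terms of $\{N_\tau\}$. Because the substitution sending an edge $\{p,q\}$ to the degree $d(p)+d(q)-2$ lowers a natural grading by subtree size, the passage from level $j$ to level $j+1$ is upper-triangular in that grading; the hope is that the leading contribution at each level is a fresh, not-yet-determined decorated count, making the $a_j$ solvable for the $N_\tau$ one at a time.

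The hard part --- where I expect the argument to stall, and where the paper itself locates the difficulty --- is precisely the mismatch noted above: one scalar per level against an unboundedly growing supply of new statistics. The triangular inversion therefore cannot be automatic; it succeeds only if no accidental linear coincidence makes two non-isomorphic degree profiles produce identical scalars at \emph{every} level. For caterpillars this requirement collapses exactly to an arithmetic non-collision statement --- whether the power sums of the spine-degree partition up to each order are simultaneously forced, which is the Waring-type problem on partitions flagged in the abstract --- and genuine coincidences of matched low-order power sums are known to occur there via Prouhet--Tarry--Escott constructions. Consequently, absent a new idea that harnesses the global tree constraints (acyclicity, which makes the relevant walk counts coincide with subtree counts, and the handshake identity $\sum_v d_v = 2(a_0-1)$) to break these coincidences, I expect the clean inversion to fail in exactly the configurations the caterpillar construction exhibits; this is why I would regard an unconditional proof of the conjecture as reducing to, and being no easier than, resolving that representation problem.
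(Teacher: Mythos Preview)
The statement you are attempting to prove is \emph{Conjecture}~\ref{Graham}; the paper does not prove it and makes no claim to. It is presented as an open problem, and the paper's contribution is the much weaker Theorem~\ref{thm:mainthm}, a superpolynomial lower bound on the number of Graham classes, obtained by exhibiting many caterpillars with provably distinct sequences $(|L^{(j)}(G)|)_{j\ge 0}$. So there is no ``paper's own proof'' to compare your proposal against.

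As for your proposal itself, you correctly diagnose why the naive inversion fails: each new term $a_{j+1}$ contributes one scalar equation, while the number of independent decorated-subtree statistics $N_\tau$ grows without bound, so the system is hopelessly underdetermined on counting grounds alone. You are also right that Prouhet--Tarry--Escott pairs supply explicit caterpillars whose joint-degree partitions have matching power sums up to any prescribed order, so the triangular inversion genuinely cannot succeed level by level. But your write-up does not then supply the ``new idea that harnesses the global tree constraints'' you call for; it ends by conceding the argument stalls. That is an honest assessment of the state of the problem, not a proof. In particular, the two ingredients you flag---acyclicity forcing walk counts to equal subtree counts, and the handshake identity---are already visible at levels $j=0,1,2$ and do nothing to separate PTE-matched caterpillars at those levels, so there is no reason to expect them to break the degeneracy higher up without a substantially different mechanism.
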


If $G$ and $H$ are two graphs, we say that they are {\em Graham equivalent} if \newline $|L^{(j)}(G)| = |L^{(j)}(H)|$ for all $j \geq 0$. The corresponding equivalence classes we call {\em Graham classes}.  We can reformulate Conjecture \ref{Graham} as follows:

\begin{conjecture}
For each $n \geq 1$, the number of Graham classes of trees on $n$ vertices equals the number of isomorphism classes of trees on $n$ vertices.
\end{conjecture}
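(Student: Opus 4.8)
The plan is simply to verify that this statement is logically equivalent to Conjecture~\ref{Graham}, so I would argue each direction. The crucial preliminary observation is that the order of a tree is a Graham invariant: for any tree $G$ the zeroth term of its associated sequence is $a_0 = |L^{(0)}(G)| = |G|$. Hence two trees of different orders are never Graham equivalent, every Graham class consists of trees on a single number $n$ of vertices, and ``the Graham classes of trees on $n$ vertices'' are precisely the blocks of the Graham-equivalence relation restricted to the finite set $T_n$ of isomorphism classes of trees on $n$ vertices.

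With that in hand, fix $n \ge 1$. The assignment sending each tree to its Graham class is a surjection from $T_n$ onto the set of Graham classes of trees on $n$ vertices, so the number of the latter is at most $|T_n|$, with equality if and only if this assignment is injective — that is, if and only if no two non-isomorphic trees on $n$ vertices share the same sequence $|L^{(j)}(G)|$, $j \ge 0$. This is exactly the assertion of Conjecture~\ref{Graham} restricted to sequences whose zeroth term equals $n$.

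It then remains to glue the per-$n$ statements together. Since any realizable sequence forces $a_0 = |G|$, the ``at most one tree'' requirement in Conjecture~\ref{Graham} decouples over the possible values of $a_0$: the conjecture holds in full if and only if it holds for each fixed value $a_0 = n$. Combining this with the previous paragraph shows that Conjecture~\ref{Graham} is equivalent to requiring that for every $n \ge 1$ the number of Graham classes of trees on $n$ vertices equal $|T_n|$, which is the displayed reformulation. There is no genuine obstacle here; the only point demanding any care is the bookkeeping that identifies the ``global single-valuedness'' condition with the family of finite counting statements, and this is immediate once the order of a tree is recognized as a Graham invariant.
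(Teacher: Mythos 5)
Your argument is correct, and it supplies exactly the justification the paper leaves implicit: the statement is a conjecture, presented only as a reformulation of Conjecture~\ref{Graham} with no proof of equivalence given, and your observation that $a_0 = |L^{(0)}(G)| = |G|$ makes the order of a tree a Graham invariant --- so that Graham equivalence decouples over $n$ and the counting statement for each $n$ is precisely injectivity of the tree-to-sequence map --- is the intended (and complete) bookkeeping.
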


As shown by Otter (\cite{O48}), the number of isomorphism classes of trees on $n$ vertices is $\widetilde{\Theta}(\alpha^{n})$, where $\alpha = 2.955765\ldots$, i.e., approximately $3^n$.  Our main result is a lower bound on the number of Graham classes of trees that is superpolynomial, although substantially subexponential.

\begin{theorem} \label{thm:mainthm} The number of Graham classes of trees on $n$ vertices is
$$
e^{\Omega( (\log n)^{3/2})}
$$
\end{theorem}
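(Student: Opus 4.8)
The plan is to exhibit an explicit collection $\mathcal{F}$ of caterpillar trees, all on $n$ vertices and pairwise Graham inequivalent, and then to show $|\mathcal{F}| = \Omega(e^{c(\log n)^{3/2}})$; since pairwise Graham-inequivalent trees lie in distinct Graham classes, this proves the theorem. A caterpillar $C$ is specified by the length $m$ of its spine $v_1 - \cdots - v_m$ together with the vector $\mathbf{a} = (a_1,\dots,a_m)$ of pendant-leaf counts, so that $n = m + \sum_i a_i$ and $\deg_C(v_i) = a_i + 2$ for interior $i$. The first task is to compute, or tightly control, the sequence $\big(|L^{(j)}(C)|\big)_{j\ge 0}$. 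The structural input is that $L(C)$ is a ``chain of cliques'' $K_{d_1} - K_{d_2} - \cdots - K_{d_m}$ in which consecutive cliques share a single cut vertex; this rigidity makes it feasible to iterate the line-graph operation and to see that $|L^{(j)}(C)|$ is $j$-local, i.e.\ a sum, over windows of $j$ consecutive spine positions, of a fixed polynomial in the degrees appearing in that window. For instance $|L^{(2)}(C)| = \sum_i \binom{d_i}{2}$, which --- with $n$ and $m$ fixed --- recovers $\sum_i a_i^2$; and $|L^{(3)}(C)|$ additionally involves $\sum_i a_i^3$ together with an arrangement-sensitive term such as $\sum_i d_i d_{i+1}$. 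More generally, an initial segment of the sequence recovers, up to an order $K$ that we keep as a parameter, a bounded list of weighted power sums $\sum_i a_i^k$, $k\le K$, of the leaf distribution --- provided the coarse arrangement of $\mathbf{a}$ is pinned down so that the window cross-terms either vanish or remain constant across the family. A convenient device for this bookkeeping is to encode a caterpillar as a weighted partition and to describe the effect of the line-graph operation directly on that encoding.

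The second task is to arrange that these recovered power-sum profiles are pairwise distinct, for then the corresponding caterpillars are pairwise Graham inequivalent. This is where the Prouhet--Tarry--Escott problem enters: its ideal solutions furnish explicit multisets of integers exhibiting prescribed coincidences among their low-order power sums, and we use these as building blocks to assemble many leaf distributions $\mathbf{a}$ --- all on the same number $n$ of vertices and with the same prescribed arrangement --- that realize pairwise distinct vectors $\big(\sum_i a_i^k\big)_{k\le K}$. With that in hand, bounding $|\mathcal{F}|$ becomes an arithmetic question: how many distinct length-$K$ power-sum vectors are attained by integer partitions of a number of size $\Theta(n)$? This is precisely the Waring-type problem on partitions flagged in the abstract, and it governs the final bound.

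For the third task one runs the trade-off in $K$. With $K$ ``readable'' line-graph iterations one can separate on the order of $\binom{n/K}{K}\approx (en/K^{2})^{K}$ caterpillars; but the Prouhet--Tarry--Escott gadgets required to drive the first $K$ power sums independently consume a number of vertices that grows rapidly in $K$ --- of the shape $2^{\Theta(K^{2})}$ in the natural construction --- so one needs $2^{\Theta(K^{2})} = O(n)$, i.e.\ $K = O(\sqrt{\log n})$. Taking $K = \Theta(\sqrt{\log n})$ balances these constraints and gives $|\mathcal{F}| = e^{\Omega(K\log n)} = e^{\Omega((\log n)^{3/2})}$, with $c>0$ absorbing the implied constants.

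I expect the main obstacle to be the first task executed precisely enough to feed the second: proving that the sequences $\big(|L^{(j)}(C)|\big)_j$ of the caterpillars in $\mathcal{F}$ genuinely recover the intended power-sum data. This needs a usable handle on iterated line graphs of chains of cliques and a careful accounting of exactly which arrangement-dependent terms surface at each depth $j$; it is the difficulty of controlling those terms that forces both the rigid, Prouhet--Tarry--Escott-structured choice of $\mathbf{a}$ and the consequent quantitative loss, and hence is responsible for the gap between this $e^{\Omega((\log n)^{3/2})}$ bound and the conjectured $\widetilde{\Theta}(\alpha^{n})$ truth. How far $K$ --- equivalently, how many power sums --- can in fact be pushed is exactly the partially-resolved Waring-type question the paper isolates.
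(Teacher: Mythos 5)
Your outline does track the paper's strategy---caterpillars whose leaf counts come from Prouhet--Tarry--Escott data, and the balance $K=\Theta(\sqrt{\log n})$ between the number of usable iterations and the $2^{\Theta(K^2)}$ vertex cost of the gadgets---but the two steps you defer are exactly where the paper's content lives, and as written both are genuine gaps rather than routine verifications. The first is the locality claim. You want $|L^{(j)}(C)|$ to be a sum over windows of $j$ consecutive spine positions of a fixed polynomial in the degrees, with the arrangement-sensitive cross-terms (like $\sum_i d_i d_{i+1}$) ``pinned down so that they vanish or remain constant across the family,'' but you give no mechanism for this, and it does not come for free: the cross-terms at depth $j$ depend on the joint degrees jointly, not merely on their multiset, and they proliferate with $j$. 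The paper's device is to place consecutive joints at spine distance $2k$, so that the sets $v^k = V(L^k(H))\setminus V(L^k(H-v))$ attached to distinct joints are disjoint; then $|L^{(k)}|$ literally decomposes as a constant plus $\sum_j f_k(d_j)$ for a single-variable degree-$k$ polynomial $f_k$ (equations (\ref{eq3})--(\ref{eq4})), and a \emph{single} iteration depth $k$ suffices to separate the trees---no vector of power sums is ever read off. Without some such separation your weighted-partition encoding does not close up under $L$.

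The second gap is the count. Your assertion that $K$ readable iterations separate on the order of $\binom{n/K}{K}$ caterpillars is precisely the Waring-type statement the paper must prove, not an available input: one needs $n^{\Omega(K)}$ partitions of the \emph{same} integer on which a specific degree-$K$ polynomial $f_K$---whose coefficients are controlled only through the ratio $\alpha$ of largest to leading coefficient (Corollary \ref{cor:ratiobound}), itself of size $2^{\Theta(K^2)}$---takes pairwise distinct values. PTE pairs by themselves supply coincidences of low power sums, not distinctness of $f_K$-values; converting them into $n^{\Omega(K)}$ pairwise-distinguishable partitions of a common $n$ requires the nested family $\mathbf{W}(k;r,s,t)$, the large shifts $R_k$ that force the leading term of the telescoped differences to dominate the uncontrolled lower-order coefficients, and the interleaving inequality (\ref{nestedeq}). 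That is Theorem \ref{thm:polybound} and it is the bulk of the paper. In short: you have the right skeleton and correctly locate the hard points, but the proof is missing at both load-bearing joints.
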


In order to describe the method of proof, we need a few (mostly standard) definitions. A {\it path} of length $n$, denoted $P_n$, is a tree on the vertex set $\{v_0,\ldots,v_n\}$ with an edge between $v_j$ and $v_{j+1}$ for each $j$, $0 \leq j < n$.  A {\it pendant vertex} in a graph $G$ is a vertex of degree one.  A {\it caterpillar} is a graph obtained from a path by attaching pendant vertices to some of the path vertices.  The path from which a caterpillar is built is its {\it spine}, the vertices on the path of degree greater than two are {\it joints}, and the pendant vertices attached to the path are {\it legs}.

The proof proceeds as follows.  We construct a collection of caterpillars $\{G_j\}$ on $n$ vertices with distinct Graham sequences.  To ensure that the Graham sequences differ, we choose the degrees $d_1$, $\ldots$, $d_t$ of specially selected joints to be a particular class of partitions associated with the Prouhet-Tarry-Escott problem, and leave the rest of the vertices legless.  We show that for each $k$ there exists a degree $k$ polynomial $f_k$ such that, for some constant $C_{n,k,t}$ depending on $n$, $k$, and $t$, 
\begin{equation} \label{lgtopolyn}
|L^{(k)}(G_j)| = C_{n,k,t} + \sum_{i = 1}^t f_k(d_i), 
\end{equation}
where $\{d_i\}$ is the degree sequence of the joints of $G_j$.

We will also need to bound from above the ratio of the largest coefficient in the relevant polynomial to its lead coefficient.  Much of the work consists of obtaining such bounds; it should be noted, however, that we make little attempt to optimize the resulting expressions other than to simplify exposition.

Finally, we construct a sufficient number of partitions \((d_1, \ldots, d_t)\) such that caterpillars constructed in correspondence to these partitions have the same number of vertices, while their Graham sequences are different.

\section{From Caterpillars to Polynomials}

Given a sequence of positive integers $d = (d_1, \ldots, d_t)$ and $m > 0$ define $\cat(d_1,\ldots, d_t; m)$ to be a caterpillar graph
whose spine is a path of length $(t+1)m-2$ on the vertex set $v_1,...,v_{m(t+1)-1}$, with $d_i$ legs attached to vertex $v_{im}$ for $1 \leq i \leq t$. We call $d$ the {\em joint degree sequence}  of $\cat(d_1,\ldots, d_t; m)$.  We will eventually define the aforementioned $G_i$ as a modified $\cat(d_1,\ldots , d_t;m)$ with suitably chosen parameters. Write $S(d;a,b)$ for a star with ``central vertex'' of degree $d$ to which two disjoint paths are appended at their endvertices: one of length $a$ and one of length $b$.  (See Figure 1.)

\begin{figure}[h]
\begin{center}
 \begin{tikzpicture}
    \tikzstyle{every node}=[draw,circle,fill=white,minimum size=4pt,
                            inner sep=0pt]

    \node (CTR) at (0,0) {};

    \draw (CTR) -- ++(18:1cm) node (LEG1) {};
    \draw (CTR) -- ++(69:1cm) node (LEG1) {};
    \draw (CTR) -- ++(121:1cm) node (LEG1) {};
    \draw (CTR) -- ++(224:1cm) node (LEG1) {};
    \draw (CTR) -- ++(275:1cm) node (LEG1) {};

    \draw (CTR) -- ++(172:1cm) node (PATH1A) {};
    \draw (PATH1A) -- ++(160:1cm) node (PATH1B) {};
    \draw (PATH1B) -- ++(180:1cm) node (PATH1C) {};

    \draw (CTR) -- ++(326:1cm) node (PATH2A) {};
    \draw (PATH2A) -- ++(356:1cm) node (PATH2B) {};
    \draw (PATH2B) -- ++(386:1cm) node (PATH2C) {};
    \draw (PATH2C) -- ++(356:1cm) node (PATH2D) {};
    \draw (PATH2D) -- ++(386:1cm) node (PATH2E) {};
    \draw (PATH2E) -- ++(416:1cm) node (PATH2F) {};
    \draw (PATH2F) -- ++(376:1cm) node (PATH2G) {};
\end{tikzpicture}
\end{center}
\caption{An $S(5;3,7)$. Alternatively, a $\cat(5,0;4)$.}
\end{figure}
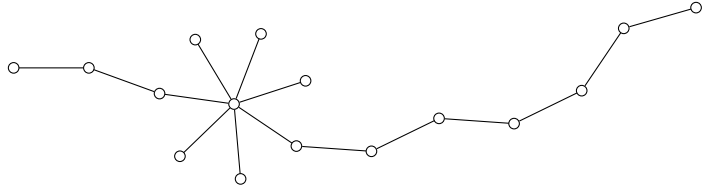

\begin{definition}
For \(X \subseteq V(G)\) define the \(i\)-th antishadow of \(X\) as \[\ash_i (X) = V(\linegraph{i} (G)) \setminus V(\linegraph{i} (G-X))\] 
\end{definition}
Intuitively, antishadow is the set of  vertices in the \(i^{th}\) line graph affected by the vertices in \(X\) and their edges. The following propositions regarding antishadows will allow us to break down the \(k^{th}\) line graph of a caterpillar $\cat(d_1,\ldots, d_t; m)$ into a union of line graphs of stars. 

\begin{prop}\label{edge}
Let \(v = \{w_1, w_2\} \in V(\linegraph{i + 1} (G))\) and \(X \subseteq V(G)\). \newline Then \(v \in \ash_{i + 1} (X)\) iff either \(w_1 \in \ash_i(X)\) or \(w_2 \in \ash_i(X)\) (or both).
\end{prop}
\begin{proof}
\(v \in \ash_{i+1} (X) = V(\linegraph{i+1} (G)) \setminus V(\linegraph{i+1} (G-X))\) if and only if \(v \in V(\linegraph{i+1} (G)) = E(\linegraph{i} (G)\) and \(v \not \in V(\linegraph{i+1} (G - X)) =  E(\linegraph{i} (G - X)).\) Since \(v = \{w_1, w_2\}\), this is equivalent to \(w_1, w_2 \in V(\linegraph{i} (G))\) and either \(w_1 \not \in V(\linegraph{i}(G-X))\) or \(w_2 \not \in V(\linegraph{i}(G-X))\). Equivalently, either \(w_1 \in \ash_i(X)\) or \(w_2 \in \ash_i(X)\).
\end{proof}

\begin{prop}\label{pathsinilg}
If \(u, v \in \linegraph{i+1} (G)\) are connected by a path of length at most \(q\), and \(u = \{u_1, u_2\}, v = \{v_1, v_2\}\), then \(u_p\) and \(v_s\) are connected by a path in \(\linegraph{i} (G)\) of length at most \(q+1\) for \(p, s = 1, 2\). 
\end{prop}
\begin{proof}
Let \(u = w_0, \ldots, w_n = v\) be the shortest path connecting \(u\) and \(v\). Due to the assumption of the proposition, \(n \leq q\). Since for any \(j = 1, \ldots, n, ~ w_{j - 1}\) and \(w_j\) are connected by an edge, the correspoding edges in \(\linegraph{i} (G)\) have a common vertex: \(w_{j - 1} \cap w_j = t_{j - 1}\). All vertices \(t_j\) are different, otherwise, if \(t_i = t_j, j > i\), then the edges \(w_i\) and \(w_{j+1}\) are incident, and the original path is not the shortest one. Since all \(t_j\) are different, \(t_0, u, t_1, w_1, t_2, \ldots, t_n, w_n, t_{n+1}\) is a path in \(\linegraph{i}(G)\). This implies that \(d(t_0, t_{n+1}) \leq n + 1 \leq q + 1\). Since \(u = \{t_0, t_1\} = \{u_1, u_2\}\) and \(v = w_n = \{t_{n-1}, t_n\} = \{v_1, v_2\}\), we have produced a path from $u_p$ to $v_s$ of length at most $q+1$ for \(p, s = 1,2\).
\end{proof}

\begin{cor}
If \(X, Y \subseteq V(G)\) and \(d(\ash_i (X), \ash_i (Y)) \geq q + 1\), then \newline \(d(\ash_{i+1} (X), \ash_{i+1} (Y)) \geq q\).
\end{cor}

\begin{cor}
If \(d(X, Y) > m\), then \(\ash_m (X) \cap \ash_m (Y) = \emptyset\).
\end{cor}

\begin{prop}
If \(\ash_m(X) \cap \ash_m (Y) = \emptyset\), then \(\ash_m (X \cup Y) = \ash_m (X) \cup \ash_m(Y)\).
\end{prop}
\begin{proof}

We proceed by induction. The base case  is trivial. Assume \(\ash_{m-1}(X \cup Y) = \ash_{m-1} (X) \cup \ash_{m-1}(Y)\) and \(\ash_m(X) \cap \ash_m (Y) = \emptyset\). Then \(v = \{w, u\} \in \ash_m (X \cup Y)\) iff (without loss of generality) \(w \in \ash_{m - 1} (X \cup Y) = \ash_{m-1}(X) \cup \ash_{m-1}(Y)  \Leftrightarrow\) either \(w \in \ash_{m-1} (X)\) or \(w \in \ash_{m-1} (Y) \Leftrightarrow\) either \(v \in \ash_m (X)\) or \(v \in \ash_m (Y) \Leftrightarrow v \in \ash_m(X) \cup \ash_m(Y)\). 
\end{proof}

Now we can compute the number of vertices in the iterated line graph of a caterpillar by considering  simple pieces. 

\begin{lemma}\label{cattostars}
Let \(m > k\). Then \[|L^{(k)}(\cat(d_1,\ldots,d_t;m))| = (t-1)(k-m) +  \sum_{j=1}^t |L^{(k)}(S(d_j;m,m))|\].
\end{lemma}
\begin{proof}
Let \(D_1, \ldots, D_t\) be the sets of pendant vertices, where each  $D_i$ is a maximal set of pendant vertices  attached to the same spine vertex. Then \(|D_j| = d_j, j = 1, \ldots, t\), and let \(D = \cup_{j=1}^t D_j\). If \(G =\cat(d_1, \ldots, d_t)\), then 
\begin{align*}
V(\linegraph{k} (G)) &= V(\linegraph{k} (G)) \setminus V(\linegraph{k} (G - D)) \cup V(\linegraph{k} (G - D))\\
& = \ash_k (D) \cup V(\linegraph{k} (G - D)) \\
& = \cup_{j = 0}^t \ash_k(D_j) \cup V(\linegraph{k} (P_{(t+1)m - 1}))
\end{align*}
Also, \(\ash_k (D_j) = V( \linegraph{k} (S(d_j, m, m))) \setminus V( \linegraph{k} (P_{2m})\), and therefore the following holds: 
\begin{align*}
|L^{(k)}(\cat(d_1,\ldots,d_t;m))| &= |L^{(k)}(P_{(t+1)m-1})| \\
& \qquad + \sum_{j=1}^t \left ( |L^{(k)}(S(d_j;m,m))| - |L^{(k)}(P_{2m})| \right )\\
&= (t+1)m - 1 - k - t (2m-k) \\ 
& \qquad + \sum_{j=1}^t |L^{(k)}(S(d_j;m,m))|\\
&= (t-1)(k-m) - 1 + \sum_{j=1}^t |L^{(k)}(S(d_j;m,m))|.
\end{align*}
\end{proof}
We will use this to choose suitable values for the joint degree sequence of each $G_i$ so that each joint degree sequence sums to the same value and  %it adds up to the same value, 
making the $G_i$ have the same size.  %Therefore, 
To this end, \((d_1, \ldots, d_t)\) can be thought of as a partition of some integer $n$. The number of elements \(t\) is the same for all partitions; this is necessary to make spines of all caterpillars have the same length. For the \(i^\textrm{th}\) such partition we can define $G_i = \cat(d_1, \ldots, d_t;m)$. We will only consider the line graphs up to the \(m^{th}\) iteration, so the order of \(d_j\) does not matter. For any permutation \(\pi\), Graham sequences of \(\cat(\pi(d_1), \ldots, \pi(d_t);m)\) and  \(\cat(d_1, \ldots, d_t;m)\) are the same up to the \(m^\textrm{th}\) element, but the caterpillars can be different. In some sense, this is a deviation from the spirit of Graham's conjecture, since the claim is that every single tree produces a different sequence. %and all trees can be distinguished by their Graham sequences, while 
Our constraints produce large classes of caterpillars indistinguishable by the first $m$ terms of their Graham sequences. Nonetheless, this constraint is essential for our argument since analyzing iterated line graphs past the point where the legs at different joints interact is prohibitively difficult. 

Next we have to analyze the terms in the sum in Lemma \ref{cattostars}.

\begin{definition}
For any \(j\) and any \(S \subseteq V(\linegraph{j}(G))\), define the shadow \(\Sh(S)\) recursively as follows.
\[
\Sh(S) = \left \{ \begin{array}{l} S \textrm{ if } S \subseteq V(G)\\ \Sh (\bigcup\limits_{s \in S} s ) \textrm{ otherwise }\end{array} \right .
\]
\end{definition}

Note that $\Sh(v)\subseteq V(G)$ for any $v \in V(L^{(m)}(G))$ and any $m \geq 0$.  

\begin{lemma} \label{lem:shadowsize} For any graph $H$ and $m \geq 0$, $|\Sh(v)| \leq m+1$ for all $v \in V(L^{(m)}(H))$.
\end{lemma}
\begin{proof} Let \(\Sh_0 (v) = \{v\}, \Sh_{j + 1} (v) = \cup_{w \in \Sh_j (v) } w \). Then \(\Sh_m (v) = \Sh (v)\). Induction shows that \(\Sh_j (v)\) induces a connected subgraph in \(\linegraph{m-j} (H)\) for any \(j\).  To begin with, note that the statement is true for one vertex in \(\Sh_0 (v)\); assume that it is true for \(\Sh_{j-1} (v)\). For any \(u_1, u_2 \in \Sh_{j} (v)\) there are \(w_1, w_2 \in \Sh_{j-1} (v), u_1 \in w_1, u_2 \in w_2\). Since \(w_1\) and \(w_2\) are connected by a path in \(\Sh_{j-1} (v)\), \(u_1\) and \(u_2\) are connected by a path in \(\Sh_{j} (v)\) due to Proposition \ref{pathsinilg} and the fact that \(\Sh_{j-1} (v) \subseteq L(\Sh_{j} (v))\). 

Induction on \(j\) also yields that \(|\Sh_{j} (v)| \leq j + 1\) for all \(j\). This is immediate for \(\Sh_{0} (v)\). Assume it is true for \(\Sh_{j-1} (v)\). Since all edges of the subgraph induced by \(\Sh_{j} (v)\) are vertices of \(\Sh_{j-1} (v)\), \(\Sh_{j} (v)\) is a connected graph with no more than \(j\) edges, and therefore can not  have more than \(j + 1\) vertices.
\end{proof}

\begin{lemma}
If \(m > k\), then \(|L^{(k)}(S(d;m,m))| = f_k(d)\) is a polynomial of degree \(k\). 
\end{lemma}
\begin{proof}
We enumerate each isomorphism type of connected subgraphs of $S(d;m,m)$ containing the central vertex as $\{H_j\}_{j \in \mathcal{J}}$.  Denote the \textit{weight} of a graph $H$ by $\wt(H) = |\{v \in V(L^{(k)}(H)) : \Sh(v) = V(H)\}|$, i.e., the number of vertices in $L^{(k)}(H)$ that ``involve'' all vertices of $H$.  Then we have:
\begin{equation} \label{eq3}
|L^{(k)}(S(d;m,m))| = |L^{(k)}(P_{2m+1})| + \sum_{j \in \mathcal{J}} \wt(H_j) B_j.
\end{equation}
where
%\begin{equation}
\[
B_j = \left \{ \begin{array}{l} d \textrm{ if } H_j \cong S(1;0,0)  \\[.1in]
\binom{d+2}{2} \textrm{ if } H_j \cong S(2;0,0)  \\[.1in]
\binom{d+2}{a} \textrm{ if } H_j \cong S(a;0,0) \textrm{ for some $a \geq 1$} \\[.1in]
2 \binom{d+1}{a} \textrm{ if } H_j \cong S(a;b,0) \textrm{ for some } a \geq 2, \, b \geq 2 \\[.1in]
2 \binom{d}{a} \textrm{ if } H_j \cong S(a;b,c) \textrm{ for some } a \geq 1, \, b \geq 2, \, c \geq 2, \, b \neq c\\[.1in]
\binom{d}{a} \textrm{ if } H_j \cong S(a;b,b) \textrm{ for some } a \geq 1, \, b \geq 2. \end{array}\right .
\]
%\end{equation}
Note that the $H_j$ all have the form $S(a;b,c)$ for some \(a,b,c \geq 0\), such that \(a+b+c+1 \leq k+1\) (due to Lemma \ref{lem:shadowsize}), and $\wt(H_j)$ depends only on $H_j$, but not on \(d\). Each \(B_j\) is a polynomial in \(d\) of degree at most \(k\). Degree \(k\) is achieved only when \(a = k, b = 0, c = 0\), and the lead coefficient in this case is 1. %This proves the lemma.
\end{proof}

Lemma \ref{cattostars} combined with (\ref{eq3}) provides a count of the vertices of $L^{(m)}(G)$ and proves the equality (\ref{lgtopolyn}).

We use this fact to construct a large collection of caterpillars $\{G_i\}_{i \in \mathcal{I}}$ with the same number of vertices \(n\) such that, whenever $i \neq j$, there is such \(k < m\) that $|\linegraph{k}(G_i)| \neq |\linegraph{k}(G_j)|$. The cardinality of \(\mathcal{I}\) is a lower bound for the number of Graham classes of trees with \(n\) vertices.

We will need an upper bound on the size of the largest coefficient, and a lower bound on the size of the lead coefficient.  The rest of this section is dedicated to obtaining these bounds.

\begin{lemma} \label{lem:degreegrowth} If $G$ is a $d$-regular graph, then $L^{(k)}(G)$ is $(2^k d-2^{k+1}+2)$-regular.
\end{lemma}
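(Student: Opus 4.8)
The plan is to proceed by induction on $k$. The base case $k = 0$ is immediate since $L^{(0)}(G) = G$ is $d$-regular, and $2^0 d - 2^1 + 2 = d$. For the inductive step, suppose $L^{(k)}(G)$ is $r$-regular where $r = 2^k d - 2^{k+1} + 2$; I want to show $L(L^{(k)}(G))$ is $r'$-regular with $r' = 2r - 2$, since a quick check gives $2(2^k d - 2^{k+1} + 2) - 2 = 2^{k+1} d - 2^{k+2} + 2$, which is exactly the claimed formula with $k$ replaced by $k+1$.

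So the crux is the single-step claim: if $H$ is $r$-regular, then $L(H)$ is $(2r-2)$-regular. Here I would argue directly from the definition of the line graph. A vertex of $L(H)$ is an edge $e = \{u,v\}$ of $H$. Its neighbors in $L(H)$ are precisely the edges of $H$ (other than $e$ itself) incident to $u$ or to $v$. Since $\deg_H(u) = \deg_H(v) = r$, there are $r - 1$ edges $\neq e$ meeting $u$ and $r - 1$ edges $\neq e$ meeting $v$; because $H$ is simple, no edge other than $e$ meets both $u$ and $v$, so these two sets are disjoint. Hence $\deg_{L(H)}(e) = (r-1) + (r-1) = 2r - 2$, independent of the choice of $e$, so $L(H)$ is $(2r-2)$-regular.

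I don't anticipate a real obstacle here — the argument is entirely elementary, and the only point requiring a moment's care is the observation that simplicity of $H$ forces the two $(r-1)$-element neighbor-sets at the two endpoints of $e$ to be disjoint (which fails for multigraphs). Combining the one-step claim with the inductive hypothesis and unwinding the recurrence $r \mapsto 2r - 2$ completes the proof.
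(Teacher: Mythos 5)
Your proposal is correct and follows essentially the same route as the paper: induction on $k$, reduced to the one-step fact that the line graph of an $r$-regular graph is $(2r-2)$-regular, verified by counting the edges meeting each endpoint of a given edge. Your explicit remark that simplicity forces the two $(r-1)$-element neighbor sets to be disjoint is a small point of care the paper leaves implicit, but the argument is the same.
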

\begin{proof} We proceed by induction.  The base case is almost immediate:  Given an edge $e \in E(G)$, its end-vertices each have degree $d$.  Therefore $e$ is incident to $d-1+d-1 = 2d-2$ edges $f$ in $G$, whence the degree of each vertex in $L(G)$ is $2d-2$.  Since $2d-2 = 2^1 d - 2^2 + 2$, we are done.  Now, suppose that $L^{(k)}(G)$ is $(2^k d-(2^{k+1}-2))$-regular.  By the base case, $L^{(k+1)}(G)$ is $(2 \cdot 2^k d-2 \cdot 2^{k+1}+4-2)$-regular.  However,
$$
2 \cdot 2^k d - 2 \cdot 2^{k+1} + 4 - 2 = 2^{k+1} d - 2^{k+2} + 2.
$$
\end{proof}

\begin{lemma}
For all \(k\) and \(n\), \(|\linegraph{k}(K_n)| \leq n^{k+1} 2^{k^2}\).
\end{lemma}
\begin{proof}
For any \(j\), by Lemma \ref{lem:degreegrowth}, \(|\linegraph{j}(K_n)| = \half (2^{j-1} (n-1) - 2^j + 2) |\linegraph{j-1}(K_n)|\). Therefore,
\begin{align*}
|L^{(k)}(K_n)|& = |K_n| \prod_{j=1}^{k} \frac{2^{j-1} ((n-1)-2) + 2}{2} = n \prod_{j=1}^{k} (2^{j-2} (n-3) + 1)\\
& \leq n \prod_{j=1}^{k} 2^{j-2} n < n^{k+1} 2^{k^2}.
\end{align*}
\end{proof}

\begin{cor} \label{cor:modstarsize} For \(k \geq 1\) and \(d \geq 3\), \(|L^{(k)}(S(d;a,b))| < (d+a+b)^k 2^{k^2}\).
\end{cor}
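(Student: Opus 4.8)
The plan is to track two quantities along the line‑graph iteration applied to $H=S(d;a,b)$: the vertex count $n_k:=|V(L^{(k)}(H))|$ and the maximum degree $\Delta_k:=\Delta(L^{(k)}(H))$. Since the vertices of $L^{(k+1)}$ are exactly the edges of $L^{(k)}$, we have $n_{k+1}=|E(L^{(k)}(H))|=\tfrac12\sum_{v}\deg(v)\le\tfrac12 n_k\Delta_k$. For the maximum degree, the computation in the proof of Lemma~\ref{lem:degreegrowth} shows that the vertex of $L(G)$ corresponding to an edge $\{u,v\}$ has degree $\deg_G(u)+\deg_G(v)-2$, so $\Delta(L(G))\le 2\Delta(G)-2$; iterating this from $k=1$ gives $\Delta_k\le 2^{k-1}(\Delta_1-2)+2$. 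Now in $H=S(d;a,b)$ the central vertex has degree at most $d+2$ and every other vertex has degree at most $2$, so $\Delta_0\le d+2$ and $\Delta_1\le d+2$ (the maximum being attained at an edge incident to the centre), and therefore $\Delta_k\le 2^{k-1}(d+2)$ for every $k\ge 1$. Finally, $n_1=|E(H)|=d+a+b$ exactly.

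Combining these, for $k\ge 1$ we get $n_{k+1}\le\tfrac12 n_k\cdot 2^{k-1}(d+2)=2^{k-2}(d+2)\,n_k$, so telescoping from $n_1=d+a+b$ yields
\[
n_K\le (d+a+b)(d+2)^{K-1}2^{\sum_{k=1}^{K-1}(k-2)}=(d+a+b)(d+2)^{K-1}2^{(K-1)(K-4)/2}.
\]
It remains to compare this with the target $(d+a+b)^K 2^{K^2/2}$. Since $d\ge 3$ we have $3(d+2)=3d+6\le 5d\le 5(d+a+b)$, i.e.\ $d+2\le\tfrac53(d+a+b)$, so it suffices to check that $(\tfrac53)^{K-1}2^{(K-1)(K-4)/2}<2^{K^2/2}$, equivalently $(\tfrac53)^{K-1}<2^{(5K-4)/2}$. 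At $K=1$ this reads $1<2^{1/2}$, and replacing $K$ by $K+1$ multiplies the left side by $\tfrac53$ and the right side by $2^{5/2}>\tfrac53$, so the inequality persists for all $K\ge 1$. Chaining the estimates then gives $n_K<(d+a+b)^K 2^{K^2/2}$, which is the corollary.

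The only real subtlety is that the inequality $n_{k+1}\le\tfrac12 n_k\Delta_k$ is wasteful, so one cannot afford to start the recursion at $n_0=1+d+a+b$ — the resulting estimate is too weak for small $K$. It is essential to start at the exact value $n_1=d+a+b$ and to retain the full $2^{(K-1)(K-4)/2}$ saving in the exponent; this is what makes the base case $K=1$ work and, by the growth comparison above, every subsequent case as well. One should also note that when $a$ or $b$ equals $0$ the centre of $S(d;a,b)$ has degree strictly less than $d+2$, but the bounds $\Delta_0,\Delta_1\le d+2$ and the identity $n_1=d+a+b$ remain valid in those cases, so no separate argument is required.
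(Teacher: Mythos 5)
Your proof is correct, and it reaches the bound by a genuinely different route than the paper. The paper's argument first computes $|L^{(k)}(S_N)|$ \emph{exactly} for the star $S_N=S(N;0,0)$ by noting $L(S_N)=K_N$ and invoking Lemma~\ref{lem:degreegrowth} to exploit the regularity of every iterated line graph of $K_N$; it then handles general $S(d;a,b)$ in one stroke by observing that $L(S(d;a,b))$ is a subgraph of $K_{d+a+b}$ and that taking line graphs preserves the subgraph relation, so the regular computation for $S_{d+a+b}$ dominates. You instead stay on the irregular graph itself, tracking the pair $(n_k,\Delta_k)$ with the handshake inequality $n_{k+1}\le\tfrac12 n_k\Delta_k$ and the max-degree recursion $\Delta_{k+1}\le 2\Delta_k-2$, and then absorb the resulting $(d+2)^{K-1}$ factor into $(d+a+b)^{K-1}$ via $d+2\le\tfrac53(d+a+b)$ for $d\ge 3$. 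Your approach buys elementarity --- no reduction to the regular case and no appeal to monotonicity of $L$ under subgraphs --- at the cost of the slightly delicate bookkeeping you flag yourself (starting the telescoping at $n_1=d+a+b$ rather than $n_0$, and the $5/3$ versus $2^{5/2}$ growth comparison); the paper's approach buys a cleaner closed form for the star at the cost of the embedding step. Both yield the same shape of bound, with the $2^{k^2/2}$ coming in each case from the degree doubling at every iteration, and your verification of the degenerate cases $a=0$ or $b=0$ is a point the paper glosses over.
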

\begin{proof} 
Since \(|L(S(d;a,b))| = d+a+b\), we have \(L(S(d;a,b)) \subseteq K_{d+a+b}\).  Therefore, \(\linegraph{k}(S(d;a,b)) \subseteq \linegraph{k-1}(K_{d+a+b})\), and 
\[
|\linegraph{k}(S(d;a,b))| \leq |\linegraph{k-1}(K_{d+a+b})| \leq (d+a+b)^k 2^{(k-1)^2} \leq (d+a+b)^k 2^{k^2}.
\]
\end{proof}

We now need an upper bound on the number of terms present in expression (\ref{eq3}).  Recall that the $H_j$ range
over isomorphism classes of graphs which occur in the shadow of nodes in the $k^{\textrm{th}}$ iterated line graph.  We have shown that these graphs have at most $k+1$ vertices.

Given a polynomial $f$, we refer to the coefficient of $f$ which is the largest in absolute value as the ``maximum coefficient''. If $f$ is degree $k$, we refer to the coefficient of $x^k$ as the ``lead coefficient''. 

\begin{theorem} An upper bound on the maximum  coefficient of $f_k$ is $2^{6k^2}$ for \(k \geq 2\).
\end{theorem}
\begin{proof} Let the maximum coefficient of $f_k$ be $C$.  Going back to expression (\ref{eq3}), we see that
$$
C \leq |\mathcal{J}| \cdot \max_{j \in \mathcal{J}} \wt(H_j) \cdot \max_{j \in \mathcal{J}, \ell \in \mathbb{N}} [d^\ell] B_j.
$$
To bound the first factor, we count the isomorphism classes of graphs on $\leq k+1$ vertices (by Lemma \ref{lem:shadowsize}) which can be embedded into $S(d;m,m)$ and contain the central vertex.  Suppose $H_j = S(a;b,c)$; then $|H_j| = a + b + c + 1$.  Therefore, an upper bound for the number of elements of $\mathcal{J}$ is the number of nonnegative integer solutions to $a + b + c + 1 \leq k + 1$, i.e., the number of nonnegative integer solutions to $a+b+c+d = k$.  This is easily seen to be $\binom{k + 3}{3}$.

To bound the second factor, we employ Corollary \ref{cor:modstarsize}.  In particular, writing $H_j = S(d_j;a_j,b_j)$,
\begin{align*}
\max_{j \in \mathcal{J}} \wt(H_j) &= \max_{j \in \mathcal{J}} \wt(S(d_j;a_j,b_j)) \\
&< \max_{j \in \mathcal{J}} (d_j+a_j+b_j)^k 2^{k^2} \\
&\leq (k+1)^k 2^{k^2}
\end{align*}
by Lemma \ref{lem:shadowsize}.

To bound the third factor, we refer to the definition of $B_j$, which states that all \(B_j\) have the form \(K {n \choose t} = K \frac{1}{t!} n(n-1)\ldots (n-t+1) = \sum_{j=0}^t s(t, j) n^j\), where \(s(t,j)\) are signed Stirling numbers of the first kind and \(K\) is a constant which can be either 1 or 2. Since \(|s(t, j)|\) can be alternatively defined as the number of permutations of \([t]\) with \(j\) cycles, these numbers are always smaller than \(t!\), and, therefore, the coefficients of \(B_j\), considered as a polynomial in \(d\), are bounded by 2. Note that \(n\) can be either \(d\),
% in which case the bound follows immediately, or \((d+1)\) or \((d+2)\), in which case it's almost evident.
\(d+1\), or \(d+2\). In each case, the bound is clear. 
Putting the pieces together, we see that
\begin{align*}
C &\leq \binom{k+3}{3} \cdot (k+1)^k 2^{k^2} \cdot 2 \\
&\leq \frac{1}{6} (k+3)^3 (k+1)^k 2^{k^2 + 1} \\
&< (k+3)^{k+3} 2^{k^2 + 1}\\
&= 2^{(k+3)\log(k+3) + k^2 + 1}\\
& \leq 2^{(k+1)(k+3) + k^2 + 1}\\
& \leq 2^{k^2 +4k + 3 + k^2 + 1}\\
& = 2^{2k^2 + 4k + 4}\\
& \leq 2^{6k^2}.
\end{align*}

\end{proof}

\begin{cor} \label{cor:ratiobound} If \(d \geq k\), then an upper bound on the ratio of the maximum coefficient to the lead coefficient of $f_k(d)$ is
\[
2^{6k^2} k!
\]
\end{cor}
\begin{proof}
All \(B_j\) have either the form \(2 {d \choose a}\) or \({d \choose a}\). The lead coefficient of \(B_j\) is nonzero only if \(a = k\), which is possible if \(d \geq k\) and \(H_j \cong S(a;0,0)\). In this case the coefficient is \(1/k!\) or \(2/k!\).  By (\ref{eq3}) and the fact that \(\wt(H_j)\) is a positive integer, all contributions to the lead coefficient of \(f_k(d)\) are nonnegative and at least \(1/k!\), so the lead coefficient of \(f_k(d)\) is as well.
\end{proof}

\section{Sums of Powers of Parts}
For a finite sequence of integers $A = \{a_i \}_{i=1}^n$, let $S_r(A) = \sum_{i = 1}^n a_i^r$, and for $t \in \Z$ let \(A + t = \{a_i + t\}_{i=1}^n\). For any function \(f\) let \(f(A) = \sum_{i=0}^n f(a_i)\). The product of two sequences will be interpreted as concatenation, i.e. if \(A = \{a_i \}_{i=1}^n\) and \(B = \{b_i \}_{i=1}^m\), then \(AB = (a_1, \ldots, a_n, b_1, \ldots, b_m)\).

Define two parametric families of sequences $\mathbf{T}_j$ and $\overline{\mathbf{T}}_j$ as follows.
\begin{enumerate}
\item $\mathbf{T}_0 = \emptyset$
\item $\overline{\mathbf{T}}_j = (0,\ldots,2^{j}-1) \setminus \mathbf{T}_j$
\item $\mathbf{T}_{j+1} = \mathbf{T}_j \left ( \overline{\mathbf{T}}_j + 2^j \right )$
\end{enumerate}

In other words, $\mathbf{T}_j$ and $\overline{\mathbf{T}}_j$ are subsequences of $(0,\ldots,2^j-1)$, and the parity of the sum of any of these numbers' binary digits determines to which sequence it belongs. If the sum is odd, the number belongs to $\mathbf{T}_j$, and if it is even, the number belongs to $\overline{\mathbf{T}}_j$. Both sequences are increasing.

It has been known since 1851 (\cite{P51}) that
\begin{equation}\label{eq1}
S_r(\mathbf{T}_k)=S_r(\overline{\mathbf{T}}_k)
\end{equation}
when $k > r$, i.e., the pair $(\mathbf{T}_k,\overline{\mathbf{T}}_k)$ provides a solution to the degree-$r$ Prouhet-Tarry-Escott problem (q.v.~\cite{B02}). We will need an extended version of this equality. 

\begin{lemma}\label{prouhetgen}
For any \(k, r\) such that \(k > r\) and any \(t \in \R\)
\[
S_r(\mathbf{T}_k + t) - S_r(\overline{\mathbf{T}}_{k} + t) = 0.
\]
\end{lemma}

\begin{proof}
\begin{align*}
S_r(\mathbf{T}_k + t) - S_r(\overline{\mathbf{T}}_k + t) 
& = \sum_{x \in \mathbf{T}_k} (x + t)^r - \sum_{x \in \overline{\mathbf{T}}_k} (x + t)^r \\
& = \sum_{x \in \mathbf{T}_k} \sum_{i=0}^r \binom{r}{i} x^i t^{r - i} - \sum_{x \in \overline{\mathbf{T}}_k} \sum_{i=0}^r \binom{r}{i} x^i t^{r - i} \\
& = \sum_{i=0}^r \binom{r}{i} t^{r - i} \left( \sum_{x \in \mathbf{T}_k} x^i - \sum_{x \in \overline{\mathbf{T}}_k} x^i \right)\\
& = \sum_{i=0}^r \binom{r}{i} t^{r - i} \left( S_i(\mathbf{T}_k) - S_i(\overline{\mathbf{T}}_k) \right) = 0
\end{align*}
\end{proof}

When $k \geq r$, the conclusion of Lemma \ref{prouhetgen} is no longer true.

\begin{prop} $S_k(\mathbf{T}_k)-S_k(\overline{\mathbf{T}}_k) = (-1)^{k+1} k! 2^{\binom{k}{2}}$ for $k \geq 1$.  Furthermore,\label{prop:funsum}
$$
\left | S_{r}(\mathbf{T}_k) - S_{r}(\overline{\mathbf{T}}_k) \right | \leq 2^{k(r+1)}
$$
for all $r \geq 0$.
\end{prop}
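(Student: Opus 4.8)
The plan is to reformulate everything through the single generating polynomial
$$
P_k(x) := \prod_{i=0}^{k-1}\bigl(1 - x^{2^i}\bigr).
$$
Expanding the product by choosing, for each $i$, whether to take the $1$ or the $-x^{2^i}$ term, one obtains $P_k(x) = \sum_{m=0}^{2^k-1}(-1)^{s(m)}x^m$, where $s(m)$ is the number of $1$'s in the binary expansion of $m$; thus the coefficient of $x^m$ is $+1$ exactly when $m \in \overline{\mathcal{T}}_k$ and $-1$ exactly when $m \in \mathcal{T}_k$. Applying the operator $\theta := x\tfrac{d}{dx}$ a total of $k+j$ times and evaluating at $x = 1$ then gives
$$
\theta^{k+j}P_k(x)\big|_{x=1} \;=\; \sum_{m=0}^{2^k-1}(-1)^{s(m)}m^{k+j} \;=\; S_{k+j}(\overline{\mathcal{T}}_k) - S_{k+j}(\mathcal{T}_k).
$$
Since the substitution $x = e^t$ carries $\theta$ to $d/dt$, this common value equals $(k+j)!$ times the coefficient of $t^{k+j}$ in $F_k(t) := P_k(e^t) = \prod_{i=0}^{k-1}\bigl(1 - e^{2^i t}\bigr)$, expanded about $t = 0$. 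So both assertions reduce to understanding the low-order Taylor coefficients of $F_k$.

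For the exact value I would observe that $1 - e^{2^i t} = -2^i t - \tfrac{1}{2}(2^i t)^2 - \cdots$ vanishes at $t = 0$ to order exactly $1$ with leading term $-2^i t$. Hence $F_k$ vanishes to order exactly $k$, with leading term $(-1)^k\bigl(\prod_{i=0}^{k-1}2^i\bigr)t^k = (-1)^k 2^{\binom{k}{2}}t^k$. Reading off $[t^k]F_k = (-1)^k 2^{\binom{k}{2}}$ and multiplying by $k!$ gives $S_k(\overline{\mathcal{T}}_k) - S_k(\mathcal{T}_k) = (-1)^k k!\,2^{\binom{k}{2}}$, which is the claimed identity after negation. (This also re-derives (\ref{eq1}): the coefficients of $t^m$ in $F_k$ vanish for every $m < k$.)

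For the inequality I would expand each factor as $1 - e^{2^i t} = -\sum_{\ell \ge 1}\tfrac{2^{i\ell}}{\ell!}t^\ell$, so that
$$
[t^{k+j}]F_k(t) \;=\; (-1)^k \sum_{\substack{\ell_0,\dots,\ell_{k-1}\ge 1\\ \ell_0+\cdots+\ell_{k-1}=k+j}} \;\prod_{i=0}^{k-1}\frac{2^{i\ell_i}}{\ell_i!}.
$$
Discarding the factorials (each $\prod_i 1/\ell_i! \le 1$), writing $\sum_i i\ell_i = \binom{k}{2} + \sum_i i(\ell_i-1) \le \binom{k}{2} + (k-1)j$, and noting there are $\binom{k+j-1}{k-1} \le 2^{k+j-1}$ such compositions, each summand is at most $2^{\binom{k}{2}+(k-1)j}$ and hence $\bigl|[t^{k+j}]F_k\bigr| \le 2^{k+j-1+\binom{k}{2}+(k-1)j}$. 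Multiplying by $(k+j)!$ and verifying the elementary inequality $k+j-1+\binom{k}{2}+(k-1)j \le \binom{k+j+1}{2}$ — the two sides differ by exactly $\binom{j+1}{2}+1$ — finishes the estimate.

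The substantive content is entirely in the two short steps above; the exact value is essentially immediate once the generating-function reformulation is in place. The main thing to be careful about is the combinatorial bookkeeping in the last paragraph — the $\ell_i - 1$ shift used to split $\sum_i i\ell_i$, and the comparison of the two quadratics in $k$ and $j$ — together with keeping straight the parity convention tying $(-1)^{s(m)}$ to membership in $\mathcal{T}_k$ versus $\overline{\mathcal{T}}_k$, which is what produces the sign $(-1)^{k+1}$ in the statement. I would also sanity-check the boundary case $j = 0$ of the inequality against the exact value just computed, since there the bound $(k+j)!\,2^{\binom{k+j+1}{2}}$ must dominate $k!\,2^{\binom{k}{2}}$, which indeed holds as $\binom{k+1}{2} \ge \binom{k}{2}$.
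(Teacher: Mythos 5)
Your proof is correct, but it takes a genuinely different route from the paper's. The paper proves both parts by induction: it uses the recursive decomposition $\mathcal{T}_k = \mathcal{T}_{k-1} \cup (\overline{\mathcal{T}}_{k-1} + 2^{k-1})$ together with the binomial theorem to express $S_{k+j}(\mathcal{T}_k) - S_{k+j}(\overline{\mathcal{T}}_k)$ in terms of lower-order differences at $k-1$, kills all but the top surviving term using the Prouhet identity (\ref{eq1}), and then runs a double induction (on $j$ and on $k$) to push the bound through, at the cost of summing a series in $2^{-rj}/r!$ at the end. You instead package everything into $P_k(x) = \prod_{i=0}^{k-1}(1-x^{2^i})$ and read the quantities off as Taylor coefficients of $F_k(t) = P_k(e^t)$; the exact value $(-1)^{k+1}k!\,2^{\binom{k}{2}}$ then falls out instantly from the order of vanishing at $t=0$ (and you get (\ref{eq1}) for free as the vanishing of the coefficients below order $k$), while the upper bound reduces to a direct count of compositions of $k+j$ into $k$ positive parts. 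I checked the bookkeeping: the split $\sum_i i\ell_i = \binom{k}{2} + \sum_i i(\ell_i - 1) \le \binom{k}{2} + (k-1)j$, the bound $\binom{k+j-1}{k-1} \le 2^{k+j-1}$, and the exponent comparison (the difference is indeed $\binom{j+1}{2}+1$) are all correct, so your estimate is in fact slightly sharper than the stated one. The trade-off is that the paper's argument is self-contained and mirrors the recursive definition of $\mathcal{T}_j$ it has just given, whereas yours is shorter, avoids the double induction entirely, and makes the mechanism behind the Prouhet--Tarry--Escott property transparent.
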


\begin{proof} We begin with the first statement, and proceed by induction.  For $k=1$,
$$
S_1(\mathbf{T}_1)-S_1(\overline{\mathbf{T}}_1) = 1^1 - 0^1 = 1 = (-1)^{1+1} 1! 2^{\binom{1}{2}}.
$$
Suppose the statement is true for $k-1$.  Then we may write
\begin{align*}
S_k(\mathbf{T}_k)-S_k(\overline{\mathbf{T}}_k) &= S_k(\mathbf{T}_{k-1})-S_k(\overline{\mathbf{T}}_{k-1}) +\\
& S_k(2^{k-1} +\overline{\mathbf{T}}_{k-1}) - S_k(2^{k-1} + \mathbf{T}_{k-1}) = \\
& S_k(\mathbf{T}_{k-1})-S_k(\overline{\mathbf{T}}_{k-1}) + \sum_{j=0}^k \binom{k}{j}2^{(k-1)j}S_{k-j}(\overline{\mathbf{T}}_{k-1})\\
&\qquad - \sum_{j=0}^k \binom{k-1}{j}2^{kj} S_{k-j}(\mathbf{T}_{k-1})
\end{align*}
by the Binomial Theorem.  Therefore,
\begin{align*}
S_k(\mathbf{T}_k)-S_k(\overline{\mathbf{T}}_k) &= \sum_{j=1}^k \binom{k}{j}2^{(k-1)j}S_{k-j}(\overline{\mathbf{T}}_{k-1}) - \sum_{j=1}^k \binom{k}{j}2^{(k-1)j} S_{k-j}(\mathbf{T}_{k-1})\\
&= k 2^{k-1} \left ( S_{k-1}(\overline{\mathbf{T}}_{k-1}) - S_{k-1}(\mathbf{T}_{k-1})\right ),
\end{align*}
since, by (\ref{eq1}), all terms with $j > 1$ are zero.  Applying the inductive hypothesis, we obtain
\begin{align*}
S_k(\mathbf{T}_k)-S_k(\overline{\mathbf{T}}_k) &= - k 2^{k-1} (-1)^{k}(k-1)! 2^{\binom{k-1}{2}} \\
&= (-1)^{k+1} k! 2^{\binom{k}{2}}.
\end{align*}

To see the second part of the statement, simply note that there are fewer than $2^k$ elements  of $\mathbf{T}_k$ (resp. $\overline{\mathbf{T}}_k$), each of which is at most $2^k$. Hence summing $r$th powers of the elements of $\mathbf{T}_k$ (resp. $\overline{\mathbf{T}}_k$) is at most $2^k(2^{kr}) = 2^{k(r+1)}$, providing the desired bound.  

\end{proof}

Before we can prove Theorem \ref{final}, we need some results (Corollary \ref{lemma:coeff}, Proposition \ref{prop:polydiff} ,Corollary \ref{cor:polycancel}, and Lemma \ref{lem:monotone}) about arbitrary polynomials. 

\begin{cor} \label{lemma:coeff} For a polynomial \(f\) of degree \(r\),  let
\[
g_k(t) = f(\mathbf{T}_k + t) - f(\overline{\mathbf{T}}_{k}+t).
\]
Then, if \(k > r\), we have \(g_k(t) = 0\) for any \(t \in \Z\). If \(k \leq r\), then \(g_k(t)\) is a polynomial of degree \(k - r\). If $C$ is the lead coefficient of $f$, and $C^\prime$ is the largest non-lead coefficient of $f$, then the ratio of the lead coefficient of $g$ and the sum of the rest of the coefficients is at most \(\frac{2^{5r^2}}{k!} \frac{C}{C^\prime}\)
\end{cor}
\begin{proof} Suppose $f(x) = \sum_{j=0}^r a_r x^r$. Then
\[
f(\mathbf{T}_k + t) = \sum_{x \in \mathbf{T}_k} f(x+t) = \sum_{x \in \mathbf{T}_k} \sum_{j=0}^r a_j (x+t)^j =\sum_{j=0}^r a_j S_j(\mathbf{T}_k + t).
\]
Similarly,
\begin{align*}
f(\overline{\mathbf{T}}_k + t) &= \sum_{j=0}^r \sum_{i=0}^j a_j  S_j(\overline{\mathbf{T}}_k + t).
\end{align*}
Consider the case \(k > r\). It follows from Lemma \ref{prouhetgen}, that
\[
f(\mathbf{T}_k + t) - f(\overline{\mathbf{T}}_k + t) = \sum_{j=0}^r a_j (S_j(\mathbf{T}_k + t) -  S_j(\overline{\mathbf{T}}_k + t)) = 0
\]
In the case \(k \leq r\),
\begin{align*}
f(\mathbf{T}_k + t) - f(\overline{\mathbf{T}}_k + t) &= \sum_{j=0}^r \sum_{i=0}^j a_j \binom{j}{i} t^{j-i} \left ( \sum_{x \in \mathbf{T}_k} x^i - \sum_{x \in \overline{\mathbf{T}}_k} x^i \right ) \\
&= \sum_{j=k}^r \sum_{i=k}^j a_j \binom{j}{i}  t^{j-i} \left ( \sum_{x \in \mathbf{T}_k} x^i - \sum_{x \in \overline{\mathbf{T}}_k} x^i \right ) \\
&= \sum_{q=0}^{r-k} t^q \sum_{j=q+k}^r a_j \binom{j}{q} \left ( \sum_{x \in \mathbf{T}_k} x^{j-q} - \sum_{x \in \overline{\mathbf{T}}_k} x^{j-q} \right ),
\end{align*}
where the second equality follows from the fact that the pair $\{\mathbf{T}_k,\overline{\mathbf{T}}_k\}$ is a solution to the Prouhet-Tarry-Escott problem of any order $i < k$.  To complete the proof, we need to show that the coefficient $c_{r-k}$ of $t^{r-k}$ is nonzero.  However,
\[
c_{r-k} = \binom{r}{k} \left ( \sum_{x \in \mathbf{T}_k} x^{k} - \sum_{x \in \overline{\mathbf{T}}_k} x^{k} \right )a_r = (-1)^{k+1} \binom{r}{k} k! 2^{\binom{k}{2}} a_r\neq 0,
\]
by Proposition \ref{prop:funsum}. For the proof of the second part of the Lemma, we note that the sum of the non-lead coefficients of $g$ is at most the largest non-lead coefficient of $f$ multiplied by
\begin{align*}
\left | \sum_{q=0}^{r-k-1} \sum_{j=q+k}^{r} \binom{j}{q} \left ( S_{j-q}(\mathbf{T}_k) - S_{j-q}(\overline{\mathbf{T}}_k) \right ) \right | & \leq \sum_{q=0}^{r-k-1} \sum_{j=q+k}^{r} \binom{j}{q} 2^{k(j-q+1)} \\
& \leq \sum_{q=0}^{r-k-1} \sum_{j=q+k}^r\frac{j^q}{q!} 2^{k(j-q+1)}\\
& < r\sum_{q=0}^{r-k-1} \frac{r^q}{q!} 2^{k(r-q+1)} \\
& < r 2^{k(r+1)} \sum_{q = 0}^\infty \frac{r^q}{q!} \\
& = r 2^{k(r+1)} e^r,
\end{align*}
where the first inequality appeals to the second part of Proposition \ref{prop:funsum}.  Therefore, the desired ratio is at most
\[
\frac{C}{C^\prime} \frac{r 2^{k(r+1)} e^r}{\binom{r}{k} k! 2^{\binom{k}{2}}} \leq \frac{C}{C^\prime} \frac{r 2^{k(r+1)} e^r}{k!} \leq \frac{C}{C^\prime} \frac{2^{\log (r)} 2^{k(r+1)} 4^r}{k!} \leq \frac{2^{5r^2}}{k!} \frac{C}{C^\prime}.
\]
\end{proof}

\begin{theorem}
If $h(x)$ is a polynomial with lead coefficient at least $N$ in absolute value, and the sum of absolute values of the rest of the coefficients   is at most $M$, then $h(x)$ is strictly monotone on the interval $(A, \infty)$, where \(A = \max(1, M/N)\). 
\end{theorem}

\begin{proof}
Let \(h(x) = \sum_{j=0}^d a_j x^j\). Assume that the lead coefficient of $h$ is positive, \(a_d = N\). We show that the first derivative of $h(x)$ is strictly positive on the interval $(A,\infty)$. 
\begin{align*}
h^\prime(x) & = \sum\limits_{j=1}^d j a_j x^{j-1} \geq  d a_d x^{d-1} - \sum\limits_{j=1}^{d-1} j |a_j| x^{j-1} \\
&\geq  d a_d x^{d-1} - (d-1) x^{d-2} \sum\limits_{j=1}^{d-1} |a_j| \\
& \geq N d x^{d-1} - (d-1) M x^{d-2}\\
& > 0,
\end{align*}
provided that \(x > M/N > ((d-1) M)/(N d)\) and \(x > 1\). If the lead coefficient of $h$ is negative, multiply $h$ by $(-1)$ and apply the above argument. \(h(x)\) in this case is decreasing for \(x > A\). 
\end{proof}

\begin{cor}\label{cor:increasing}
If \(m < K, k < K\), \(f_m(d) = |L^{(m)}(S(d;K,K))|\), and \(g(t) = f(\mathbf{T}_k + t) - f(\overline{\mathbf{T}}_{k}+t)\), then \(g(t)\) is monotone for \(t > 2^{11 K^2}\). 
\end{cor}
\begin{proof}
Due to Corollary \ref{cor:ratiobound}, \(\frac{C}{C^\prime} < k! 2^{6k^2}\). Therefore, if \(N\) is the lead coefficient of \(g\), and \(M\) is the sum of the rest of the coefficients, then \(\frac{M}{N} \leq \frac{k! 2^{6k^2} 2^{5K^2}}{k!} \leq 2^{11 K^2}\).
\end{proof}

For $r \geq 0$, $k \geq 0$ and $s \geq t \geq 0$, define the sequence $\mathbf{W}(k;r,s,t)$ as follows.

\begin{align*}
\mathbf{W}(k;r,s,t) &= (\overline{\mathbf{T}}_k)^r (\mathbf{T}_k)^{s}\left (\prod_{j=1}^{t} (\overline{\mathbf{T}}_k+j2^k) \right )(\mathbf{T}_k+(t+1)2^k) \\
& \qquad \prod_{j=1}^{s-t} (\overline{\mathbf{T}}_k+(j+t+1)2^k)\prod_{j=1}^{r}(\mathbf{T}_k+(j+s+1)2^k)
\end{align*}
where the empty product is interpreted as the empty sequence. For example \(\mathbf{T}_2 = (1, 2)\) and \(\overline{\mathbf{T}}_2 = (0, 3)\), so
\[
\mathbf{W}(2;2,2,1) = (0,3,0,3,1,2,1,2,4,7,9,10,12,15,17,18,21,22)
\]

\begin{prop}
$\mathbf{W}(k;r,s,t)$ is a partition of \(4^{k-1}((r + s)^2 +5r + 5s + 3) -2^{k-2}(2r+2s+1)\) consisting of $2^{k-1}(2r + 2s+1)$ parts for \(k \geq 2\).
\end{prop}
\begin{proof}
It follows from (\ref{eq1}) with \(r = 1\) that the sums of the elements of $\mathbf{T}_k$ and $\overline{\mathbf{T}}_k$ are the same and, therefore, are equal to $4^{k-1}-2^{k-2} = B$ for $k \geq 2$. Also, it can be proved by induction that, for \(k \geq 1\), \(\mathbf{T}_k\) and \(\overline{\mathbf{T}}_k\) have the same number of elements, which is \(2^{k-1}\). Note that if a number \(a\) is added to the sequence \(\mathbf{T}_k\) (or \(\overline{\mathbf{T}}_k\)), the sum of elements will increase by \(a 2^{k-1}\). Therefore, for $k \geq 2$, $\mathbf{W}(k;r,s,t)$ is a partition of 
\begin{align*}
& rB + sB + \sum_{j=1}^t (B+2^{k-1}j2^k) + (B + 2^{k-1}(t+1)2^k) \\
& + \sum_{j=1}^{s-t} (B + 2^{k-1}(t+j+1)2^k) + \sum_{j=1}^r (B + 2^{k-1}(j+s+1)2^k) \\
& = B(r + s + t + 1 + s - t + r) \\
& \qquad + 2^{2k-1} \left (\sum_{j=1}^t j + (t+1) + \sum_{j=1}^{s-t} (t+j+1) + \sum_{j=1}^r (j+s+1) \right ) \\
& = B(2r + 2s + 1) \\
& \qquad + 2^{2k-1} \left ( t+1 + (t+1)(s-t) + (s+1)r + \sum_{j=1}^t j + \sum_{j=1}^{s-t} j + \sum_{j=1}^r j \right ) \\
& = B(2r + 2s + 1) + 2^{2k-1} \left ( (t+1)(s-t+1) + (s+1)r \right . \\
& \left . \qquad + \frac{t(t+1)}{2}+ \frac{(s-t)(s-t+1)}{2}+ \frac{r(r+1)}{2} \right ) \\
& = (4^{k-1}-2^{k-2})(2r + 2s + 1) + 2^{2k-1} (s^2/2 +r^2/2 +3r/2+3s/2+rs +1)\\
& = 4^{k-1}(2r + 2s +1) + 4^{k-1}(s^2 + r^2 +3r+3s+2rs +2) - 2^{k-2} (2r+2s+1)\\
&= 4^{k-1}((r + s)^2 +5r + 5s + 3) -2^{k-2}(2r+2s+1).
\end{align*}
The number of parts in the partition represented by $\mathbf{W}(k;r,s,t)$ can be calculated directly from the definition and the number of parts in \(\mathbf{T}_k\) and \(\overline{\mathbf{T}}_k\).
\end{proof}

Define $\mathcal{W}^{ks}_j$ for $1 \leq j < (s+2)(s+1)/2$ to be the $j$-th element of the sequence:
\begin{align*}
& \mathbf{W}(k;0,s,0), \mathbf{W}(k;0,s,1), \mathbf{W}(k;0,s,2), \ldots, \mathbf{W}(k;0,s,s),\\
&  \mathbf{W}(k;1,s-1,0), \mathbf{W}(k;1,s-1,1), \ldots, \mathbf{W}(k;1,s-1,s-1),\\
&  \mathbf{W}(k;2,s-2,0), \mathbf{W}(k;2,s-2,1),  \ldots, \mathbf{W}(k;2,s-2,s-2),\\
& \qquad \vdots \\
&  \mathbf{W}(k;s-1,1,0),  \mathbf{W}(k;s-1,1,1), \\
&  \mathbf{W}(k;s,0,0).
\end{align*}
Note that each of these $\mathcal{W}^{ks}_j$ is a partition of $4^{k-1}(s^2+5s+3) - 2^{k-2}(2s+1)$ of length $2^{k-1}(2s+1)$. 

\begin{prop} \label{prop:polydiff} 
For any polynomial $f$, let  \(g(t) = f(\mathbf{T}_k + t) - f(\overline{\mathbf{T}}_{k}+t)\). Then
\[
f(\mathbf{W}(k;r,s,t+1))-f(\mathbf{W}(k;r,s,t)) = g((t+2)2^k) - g((t+1)2^k)
\]
In addition, for $r \leq s - 1$,
\[
f(\mathbf{W}(k;r+1,s-1,0))-f(\mathbf{W}(k;r,s,s)) = g(2^k)-g(0)
\]
\end{prop}
\begin{proof}
For any polynomial $f$, note that there will be some cancellation in the difference $f(\mathbf{W}(k;r,s,t+1)) -f(\mathbf{W}(k;r,s,t))$ because $\mathbf{W}(k; r,s,t+1)$ and $\mathbf{W}(k;r,s,t)$ share a common prefix of $(\overline{\mathbf{T}}_k)^r (\mathbf{T}_k)^s$, 
and a common suffix of $\prod_{j=1}^r (\mathbf{T}_k + (j + s +1) 2^k)$. We focus now on the remaining terms. The middle terms of $\mathbf{W}(k;r,s,t)$ are of the form:
\begin{align*}
&\left (\prod_{j=1}^{t} (\overline{\mathbf{T}}_k+j2^k) \right )(\mathbf{T}_k+(t+1)2^k)   \prod_{j=1}^{s-t} (\overline{\mathbf{T}}_k+(j+t+1)2^k)\\
&= (\overline{\mathbf{T}}_k + 2^k) (\overline{\mathbf{T}}_k +2 \cdot 2^k) \ldots (\overline{\mathbf{T}}_k + t2^k) \\
& (\mathbf{T}_k + (t+1) 2^k) (\overline{\mathbf{T}}_k+ (t+2)2^k) \ldots (\overline{\mathbf{T}}_k+ (s+1)2^k).
\end{align*}
The middle terms of $\mathbf{W}(k;r,s,t+1)$ are of the form:
\begin{align*}
&\left (\prod_{j=1}^{t+1} (\overline{\mathbf{T}}_k+j2^k) \right )(\mathbf{T}_k+(t+2)2^k)   \prod_{j=1}^{s-(t+1)} (\overline{\mathbf{T}}_k+(j+(t+1)+1)2^k)\\
&= (\overline{\mathbf{T}}_k + 2^k) (\overline{\mathbf{T}}_k +2 \cdot 2^k) \ldots (\overline{\mathbf{T}}_k + (t+1)2^k) \\
& (\mathbf{T}_k + (t+2) 2^k) (\overline{\mathbf{T}}_k+ (t+3)2^k) \ldots (\overline{\mathbf{T}}_k+ (s+1)2^k).
\end{align*}
Again, many terms cancel. In particular, we have:
\begin{align*}
f(\mathbf{W}(k;r,s,t+1)) - f(\mathbf{W}(k;r,s,t)) &= f(\overline{\mathbf{T}}_k + (t+1)2^k) + f(\mathbf{T}_k + (t+2)2^k)\\&- f(\mathbf{T}_k   + (t+1)2^k)-f(\overline{\mathbf{T}}_k + (t+2)2^k)\\
& = g_k((t+2)2^k) -g_k((t+1)2^k).
\end{align*}
This verifies the first part of the proposition.     

The second part follows from an application of Corollary \ref{lemma:coeff}, and the observation that:
\begin{align*}
\mathbf{W}(k;r+1,s-r-1, 0) &= (\overline{\mathbf{T}}_k)^{r+1} (\mathbf{T}_k)^{s-r-1}(\mathbf{T}_k + 2^k)\\
&  \prod_{j=1}^{s-r-1}(\overline{\mathbf{T}}_k +(j+1)2^k) \prod_{j=1}^{r+1}(\mathbf{T}_k + (j+s-r)2^k)
\end{align*}
and 
\begin{align*}
\mathbf{W}(k;r,s-r,s-r)& =( \overline{\mathbf{T}}_k)^r (\mathbf{T}_k)^{s-r} \left( \prod_{j=1}^{s-r}(\overline{\mathbf{T}}_k +j2^k) \right) \\
& (\mathbf{T}_k + (s-r+1)2^k)\prod_{j=1}^r(\mathbf{T}_k+(j+s-r+1)2^k).
\end{align*}
Hence we have
\begin{align*}
& f(\mathbf{W}(k;r,s-r-1,0)) -f(\mathbf{W}(k;r,s-r,s-r))\\
& = f(\overline{\mathbf{T}}_k) - f(\mathbf{T}_k) + f(\mathbf{T}_k + 2^k) - f(\overline{\mathbf{T}}_k +2^k)\\
& = g_k(2^k) -g_k(0)
\end{align*}
which completes the proof, after substituting $s-r$ for $s$. 

\end{proof}

Since the polynomials \(f_k\) that constitute the formula for the size of the \(k^\textrm{th}\) iterated line graph are increasing only after a certain point, it will be useful to analyze $f$ with its variable shifted by an additive constant. 

\begin{cor}
\label{cor:polycancel}
For any polynomial $f$, let \(g(t) = f(\mathbf{T}_k + t) - f(\overline{\mathbf{T}}_{k}+t)\). Then
\[
f(\mathbf{W}(k;r,s,t+1)+A)-f(\mathbf{W}(k;r,s,t)+A) = g((t+2)2^k+A) - g((t+1)2^k+A)
\]
In addition, for $r \leq s - 1$,
\[
f(\mathbf{W}(k;r+1,s-1,0)+A)-f(\mathbf{W}(k;r,s,s)+A) = g(2^k+A)-g(A)
\]
\end{cor}

\begin{proof}
The proof is identical to that of Proposition (\ref{prop:polydiff}).
\end{proof}

\begin{theorem} \label{thm:monotoneventually}
If $h(x)$ is a polynomial with lead coefficient at least $N$ in absolute value, and the sum of absolute values of the rest of the coefficients   is at most $M$, then $h(x)$ is strictly monotone on the interval $(A, \infty)$, where \(A = \max(1, M/N)\). 
\end{theorem}

\begin{proof}
Let \(h(x) = \sum_{j=0}^d a_j x^j\). Assume that the lead coefficient of $h$ is positive, \(a_d = N\). We show that the first derivative of $h(x)$ is strictly positive on the interval $(A,\infty)$. 
\begin{align*}
h^\prime(x) & = \sum\limits_{j=1}^d j a_j x^{j-1} \geq  d a_d x^{d-1} - \sum\limits_{j=1}^{d-1} j |a_j| x^{j-1} \\
&\geq  d a_d x^{d-1} - (d-1) x^{d-2} \sum\limits_{j=1}^{d-1} |a_j| \\
& \geq N d x^{d-1} - (d-1) M x^{d-2}\\
& > 0,
\end{align*}
provided that \(x > M/N > ((d-1) M)/(N d)\) and \(x > 1\). If the lead coefficient of $h$ is negative, multiply $h$ by $-1$ and apply the above argument. \(h(x)\) in this case is decreasing for \(x > A\). 
\end{proof}

\begin{lemma} \label{lem:monotone}
Consider any polynomial $f$ of degree \(r\), and let \(A\) be as in the statement of Theorem \ref{thm:monotoneventually}. If \(r \geq k\), then the sequence $f(\mathcal{W}^{ks}_j + A)$ is strictly monotone in \(j\) for $1 \leq j \leq \frac{(s+2)(s+1)}{2}$ and any \(s \geq 1, k \geq 2\). If \(r < k\), then this sequence is constant for any \(A\). 
\end{lemma}

\begin{proof}
Let  \(g(t) = f(\mathbf{T}_k + t) - f(\overline{\mathbf{T}}_{k}+t)\). Consider the case \(r \geq k\). Since \(g\) is a polynomial, there is an \(A\) so that \(g(x)\) is strictly monotone for \(x > A\). Consider the case when \(g(x)\) is increasing. The decreasing case is similar.  For any $j$, $\mathcal{W}^{ks}_j$ is of the form $\mathbf{W}(k;u, s-u, t)$ for some $s \geq u \geq 0$ and $s-u \geq t \geq 0$. We have two cases.

\textit{Case 1}. $\mathcal{W}_{j+1}^{ks}$ is of the form $\mathbf{W}(k;u,s-u,t+1)$ (corresponding to a change within a row in the array). In this case, Corollary \ref{cor:polycancel} tells us that \(f(\mathcal{W}_{j+1}^{ks} +A)-f(\mathcal{W}^{ks}_j +A) = g((t+2)2^k +A) - g((t+1)2^k +A) > 0 \), since \((t+2)2^k +A > (t+1)2^k +A\).

\textit{Case 2}. $\mathcal{W}_{j+1}^{ks}$ is of the form $\mathbf{W}(k;u+1,s-u-1,0)$ (corresponding to a transition down one row in the array). Corollary \ref{cor:polycancel} tells us that \(f(\mathcal{W}_{j+1}^{ks} + A2^k) - f(\mathcal{W}^{ks}_j + A2^k ) = g(2^k + A) - g(0 + A) > 0\).

The case when \(r < k\) follows from the same considerations and the fact that \(g(t) = 0\) for all \(t\), which follows from Corollary \ref{lemma:coeff}. 

\end{proof}
We now specialize the results of Corollary \ref{lemma:coeff}, Proposition \ref{prop:polydiff}, Corollary \ref{cor:polycancel}, and Lemma \ref{lem:monotone} to polynomials which are the size of some iterated line graphs. 
\begin{cor}\label{cor:increasing2}
If \(m < K, k < K\), \(f_m(d) = |L^{(m)}(S(d;K,K))|\), and \(g(t) = f_m(\mathbf{T}_k + t) - f_m(\overline{\mathbf{T}}_{k}+t)\), then \(g(t)\) is monotone for \(t > 2^{11 K^2}\). 
\end{cor}
\begin{proof}
Let \(C\) be the lead coefficient of \(f_m\), and \(C^\prime\) its largest non-lead coefficient.  Due to Corollary \ref{cor:ratiobound}, \(\frac{C}{C^\prime} < k! 2^{6k^2}\). Therefore, if \(N\) is the lead coefficient of \(g\), and \(M\) is the sum of the absolute values of the rest of the coefficients, then \(\frac{M}{N} \leq \frac{k! 2^{6k^2} 2^{5K^2}}{k!} \leq 2^{11 K^2}\).
\end{proof}

We are now ready to prove  Theorem \ref{final}.
 
\begin{theorem}\label{final}
For each $K >0$ and for some constant \(C\), there exists an \(N_0 \leq 2^{C K^2}\) such that for any \(N \geq N_0\), there are $\Omega(N^{K-1})$ distinct Graham classes of trees on $N$ vertices. 
\end{theorem} 
\begin{proof}
%\begin{comment}
%For each $m$,  there exists a degree \(m\) polynomial $f_m$ such that if $\{d_i\}_{i=1}^\ell $ are positive integers, 
%then $|L^{(m)} (\cat(d_1, \ldots, d_\ell;k))| = \sum_{i=1}^\ell f_m(d_i)$ for \(k \geq m\) by Lemma \ref{cattostars}. Fix $K > 0$ and each such polynomial and consider  $\{f_2, f_3, \ldots, f_K\}$. We will produce an $N = N(K)$ and a collection of partitions  of $N$, $\{\lambda_j^{ks}\}$ such that $ |\{f_h(\lambda_j^{ks})\}_{h \leq K}| = \Omega(N^{K-1})$. Each partition will define a caterpillar in the following way. Let \(\lambda_j^{ks} = (d_1, \ldots, d_p)\) then the corresponding caterpillar will be \(\cat(d_1, \ldots, d_p; k)\). Note that not only \(\lambda_j^{ks}\) are partitions of the same number for all \(j\), but they also have the same number of elements, so all caterpillars have the same length, which makes them different trees of the same size.
 
%Since all these caterpillars are of the same size, they also have the same number of edges, which is the size of \(\linegraph{1}(G)\). Therefore, it's only reasonable to look at the line graph sizes starting with \(\linegraph{2}(G)\). 
%\end{comment}
 
For each $2 \leq k \leq K$, there exists such number $A_k$ that the sequence $\{f_k(W_j^{ks} + A_k)\}$ is strictly monotone (without loss of generality assume that it is  increasing) in $j$. If $A = \max_{2 \leq k\leq K} A_k$, then, due to Lemma \ref{lem:monotone} and Corollary \ref{cor:increasing2}, \(A < 2^{11K^2}\). We write $\lambda_j^{ks} = \mathcal{W}^{ks}_j + A$.  Observe that $\lambda_j^{ks} \vdash$ $4^{k-1}(s^2+5s+3) - 2^{k-2}(2s+1)(1 - 2 A)= n_{ks}$ for $2 \leq k \leq K$ and for $1\leq j\leq \frac{(s+2)(s+1)}{2}$.  In particular, when $k$ is held constant, $n_{ks} = O(s^2)$. 
 
Let $\Lambda^{Ks}$ denote the collection of partitions of the form \(\prod_{i=2}^K \lambda^{is}_{j_i}\) for all possible choices of indices \(j_i\). In other words, $\Lambda^{Ks}$ is the collection of partitions which are concatenations of precisely one $\lambda_{j}^{is}$ for each $2 \leq i \leq K$. Observe that every element of $\Lambda^{Ks}$ is a partition of $\sum_{k=2}^{K} n_{ks} = N$. In particular, each element of $\Lambda^K$ forms an ordered partition of the {\em same} $N$, which (when $K$ is constant) is $O(s^2)$. When \(s\) is constant, \(N = O(2^{C K^2})\) for some constant \(C\). 

There are exactly \(N^{K-1}\) elements in the family \(\Lambda^{Ks}\), so if we can prove that for any distinct \(\lambda_1, \lambda_2 \in \Lambda^{Ks}\) the caterpillars \(\cat(\lambda_1; K)\) and \(\cat(\lambda_2; K)\) produce different Graham sequences, then  Theorem \ref{final} is proved. Due to Corollary  \ref{cor:polycancel}, this is equivalent to \(g_k(\lambda_1) \not = g_k(\lambda_2)\) for some \(k \leq K\). 

For \(\lambda \in \Lambda^{Ks}\) denote the sequence \((f_2(\lambda), \ldots, f_K(\lambda))\) as \(F(\lambda)\). We need to prove that for any \(\lambda_1, \lambda_2 \in \Lambda^{Ks}, \lambda_1 \not = \lambda_2\) the sequences \(F(\lambda_1)\) and \(F(\lambda_2)\) are different. Let \(\lambda_i = \lambda_{j_{Ki}}^{Ks} \lambda_{j_{(K-1)i}}^{(K-1)s} \ldots \lambda_{j_{3i}}^{3s} \lambda_{j_{2i}}^{2s}\) for \(i = 1, 2\). It can be proved by induction that if there is such \(k\) that \(\lambda_{j_{k1}}^{ks} \not = \lambda_{j_{k2}}^{ks}\) then \(f_k(\lambda_1) \not = f_k(\lambda_2)\). The base case for the induction is \(k = K\). It follows from Lemma \ref{lem:monotone} that \(f_K(\lambda_{j_{K1}}^{Ks}) \not = f_K(\lambda_{j_{K2}}^{Ks})\) and for any \(i \leq K\), \(f_K(\lambda_{j_{i1}}^{is}) = f_K(\lambda_{j_{i2}}^{is})\). Therefore, \(f_K(\lambda_1) \not = f_K(\lambda_2)\). 

Assume Theorem \ref{final} is  proved for \(k = K, \ldots, p + 1\). This covers all cases when \(\lambda_{j_{k1}}^{ks} \not = \lambda_{j_{k2}}^{ks}\) for \(k = K, \ldots, p + 1\), therefore we can assume that \(\lambda_{j_{k1}}^{ks} = \lambda_{j_{k2}}^{ks}\) for these \(k\), and \(\lambda_{j_{p1}}^{ps} \not = \lambda_{j_{p2}}^{ps}\). Then 

\begin{align*}
f_p(\lambda_1) - f_p(\lambda_2) 
&= \sum\limits_{k=p+1}^K (f_p(\lambda_{j_{k1}}^{ks}) - f_p(\lambda_{j_{k2}}^{ks})) + f_p(\lambda_{j_{p1}}^{ps}) - f_p(\lambda_{j_{p2}}^{ps})\\
&+ \sum\limits_{k=2}^{p-1} (f_p(\lambda_{j_{k1}}^{ks}) - f_p(\lambda_{j_{k2}}^{ks})) = f_p(\lambda_{j_{p1}}^{ps}) - f_p(\lambda_{j_{p2}}^{ps}) \not = 0, 
\end{align*}
where the first summand is equal to 0 due to the inductive assumption, and the second due to Lemma \ref{lem:monotone}.
\end{proof}

Note that we take \(k \geq 2\) because the size of the first line graph of a tree is completely determined by the size of the tree. 

\begin{cor}
For any \(N\) there are \(e^{\Omega((\log N)^{3/2})}\) Graham classes of trees on \(N\) vertices.
\end{cor}
\begin{proof}
Due to Theorem \ref{final}, for any \(K\) there exists \(N = O(2^{C K^2})\) so that there are \(\Omega(N^{K-1})\) distinct Graham classes of trees on \(N\) vertices. Therefore, there are \(\Omega(N^{\Omega(\log(N)^{1/2})}) = e^{\Omega((\log N)^{3/2})}\) distinct Graham classes of trees on \(N\) vertices.
\end{proof}

\section{Acknowledgements}

This work was funded in part by NSF grant DMS-1001370.

\end{document}